
\documentclass[12pt,letterpaper,reqno]{amsart}

\usepackage{times} 
\usepackage[T1]{fontenc} 
\usepackage{mathrsfs} 
\usepackage{latexsym} 
\usepackage{float}
\usepackage{pdfpages}
\usepackage{graphicx}
\usepackage{blindtext}

\usepackage{epsfig}
\usepackage{amsmath,amsfonts,amsthm,amssymb,amscd} 
\input amssym.def 
\input amssym.tex

\addtolength{\textwidth}{2cm} 
\addtolength{\hoffset}{-1cm} 
\addtolength{\marginparwidth}{-1cm}
\addtolength{\textheight}{2cm} 
\addtolength{\voffset}{-1cm}

\newcommand\be{\begin{equation}} 
\newcommand\ee{\end{equation}}
\newcommand\bea{\begin{eqnarray}} 
\newcommand\eea{\end{eqnarray}} 
\newcommand\bi{\begin{itemize}}
\newcommand\ei{\end{itemize}} 
\newcommand\ben{\begin{enumerate}} 
\newcommand\een{\end{enumerate}}
\newcommand\bc{\begin{center}} 
\newcommand\ec{\end{center}} 
\newcommand\ba{\begin{array}} 
\newcommand\ea{\end{array}}



\newtheorem{thm}{Theorem}[section]

\newtheorem{prop}[thm]{Proposition}

\theoremstyle{definition} 



\begin{document}

\title[Can connected commuting graphs of finite groups have arbitrarily large diameter ?] 
{Can connected commuting graphs of finite groups have arbitrarily large diameter ?}


\author{Peter Hegarty} \address{Department of Mathematical Sciences, 
Chalmers University Of Technology and University of Gothenburg,
41296 Gothenburg, Sweden} \email{hegarty@chalmers.se}

\author{Dmitry Zhelezov} \address{Department of Mathematical Sciences, Chalmers 
University Of Technology and University of Gothenburg,
41296 Gothenburg, Sweden} \email{zhelezov@chalmers.se}

\subjclass[2000]{20P05 (primary).} \keywords{}

\date{\today}

\begin{abstract} 
We present a family of finite, non-abelian groups and propose that there are members of this family whose commuting graphs are connected and of arbitrarily large diameter. If true, this would disprove a conjecture of Iranmanesh and Jafarzadeh. While unable to prove our claim, we present a heuristic argument in favour of it. We also present the results of simulations which yielded explicit examples of groups whose commuting graphs have all possible diameters up to and including 10. Previously, no finite group whose commuting graph had diameter greater than 6 was known.
\end{abstract}


\maketitle

\setcounter{equation}{0}

\setcounter{equation}{0}

\setcounter{section}{-1}

\section{Notation}
Let $f,g : \mathbb{N} \rightarrow \mathbb{R}_{+}$. 
The following rather standard notations will be used in this \\ paper : 
\par (i) $f(n) \sim g(n)$ means that $\lim_{n \rightarrow \infty} \frac{f(n)}{g(n)} = 1$.
\par (ii) $f(n) \lesssim g(n)$ means that $\limsup_{n \rightarrow \infty} \frac{f(n)}{g(n)} \leq 1$.
\par (iii) $f(n) \gtrsim g(n)$ means that $g(n) \lesssim f(n)$.
\par (iv) $f(n) = O(g(n))$ means that $\limsup_{n \rightarrow \infty} \frac{f(n)}{g(n)} < \infty$.
\par (v) $f(n) = \Omega(g(n))$ means that $g(n) = O(f(n))$.
\par (vi) $f(n) = \Theta(g(n))$ means that both $f(n) = O(g(n))$ and $g(n) = O(f(n))$ hold.
\par (vii) $f(n) = o(g(n))$ means that $\lim_{n \rightarrow \infty} \frac{f(n)}{g(n)} = 0$. 
 
\section{Introduction}
Let $G$ be a non-abelian group. The {\em commuting graph} of $G$, denoted  
by $\Gamma(G)$, is usually defined as the graph whose vertices are the non-central elements of $G$, and such that $\{x,y\}$ is an edge if and only if 
$xy = yx$. Clearly, one can just as well define the graph to have as its vertices the non-identity cosets of $Z(G)$, such that $\{Zx,Zy\}$ is an edge if and only if $xy = yx$. This is the definition we will adopt henceforth. Commuting graphs of groups were first mentioned in the seminal paper of Brauer and Fowler \cite{BF} which was concerned with the classification of the finite simple groups. Interest in commuting graphs in their own right is often traced back to a question posed by Erd\H{o}s, and answered by Neumann \cite{N}, who showed that if the commuting graph of a group has no infinite independent set, then it cannot have arbitrarily large finite independent sets either. In this paper, we are only concerned with finite groups. \par If $\Gamma$ is any finite, connected graph, the diameter of $\Gamma$, denoted diam$(\Gamma)$, is defined to be the maximum of the distances between pairs of vertices in $\Gamma$. Here, the distance between vertices $x$ and $y$ is the minimum number of edges in a path from $x$ to $y$. If $\Gamma$ is disconnected, one sets diam$(\Gamma) := \infty$. The following interesting conjecture was made in \cite{IJ} :
\\
\\
{\bf Conjecture 1.1.}
{\em There is a natural number $b$ such that if $G$ is a finite, non-abelian group with $\Gamma(G)$ connected, then diam$(\Gamma(G)) \leq b$.}
\\
\\
This may seem like a very surprising conjecture at first sight, but in \cite{IJ} the authors provided some supporting evidence by proving that, for $n \geq 3$, the commuting graph of the full symmetric group $S_n$ is connected if and only if neither $n$ nor $n-1$ is prime, and in that case diam$(\Gamma(S_n)) \leq 5$. Previously, Segev and Seitz \cite{SeSe} had shown that if $G$ is a finite simple classical group over a field of size at least 5, then either $\Gamma(G)$ is disconnected or diam$(\Gamma(G)) \leq 10$ (it is not known if the upper bound is sharp). 
\par Conjecture 1.1 has attracted considerable attention, most recently with a preprint of Giudici and Pope \cite{GP}, which includes a comprehensive summary of existing partial results. Indeed, every paper published so far on this topic seems to take the viewpoint that Conjecture 1.1 is probably true and seeks to provide evidence to support it. Most of the evidence has come from groups which are structurally ``very far'' from being abelian, as in, for example, the results mentioned in the previous paragraph. The intuition here is clear: A priori, the more ``non-abelian'' a group is, the larger we would expect to be the diameter of its commuting graph. There is, of course, the risk that the graph becomes disconnected. But in many cases it turns out that the graphs are, in fact, 
connected, and when that is the case the diameter is bounded and small.
\par Giudici and Pope provide the first evidence in support of Conjecture 1.1 coming from $p$-groups. The following result of theirs is particularly striking:
\\
\\
{\bf Theorem 1.2 (Theorem 1.4 in \cite{GP}).} {\em If $G^{\prime} \subseteq Z(G)$ and $|Z(G)|^3 < |G|$, then \\ diam$(\Gamma(G)) = 2$.}  
\\
\\
The groups in this theorem are certainly ``very close to abelian'' in a structural sense, so that now we have evidence in support of Conjecture 1.1 coming from ``both extremes'', so to speak. 
\par Our purpose in this paper is to explain why we think Conjecture 1.1 is false. We will describe a family of groups which we believe violates the conjecture, and present some evidence that such is the case, though not a rigorous proof. Though we came upon our idea independently of Giudici and Pope, their work also gives strong hints where one might look for counterexamples. In addition to Theorem 1.2 above, they also proved (Theorem 1.5 in \cite{GP}) that if $(G:Z(G))$ is a product of three not necessarily distinct primes, then $\Gamma(G)$ is disconnected. Putting their two results together suggests one should look at groups of nilpotence class 2 in which the center is neither too large nor too small. This is exactly what we shall do. By the way, another natural reason to look at groups of nilpotence class 2 is that is it easier to ``keep track of'' the commutator relations in such groups, as the maps $g \mapsto [x,g]$ are additive, for every $x \in G$. 
\par The rest of the paper is organised as follows. Section 2 recalls some facts about the diameters of Erd\H{o}s-Renyi random graphs $G(n,p)$. It is known that, for any $\epsilon > 0$, if $p = n^{-1+\epsilon}$, then diam$(G(n,p))$ concentrates on $\lceil 1/\epsilon \rceil$. This material is standard, though may not be familiar to group theorists. Section 3 presents the family of groups whose commuting graphs are expected to achieve arbitrarily large diameter - our claim to this effect is formulated explicitly in Conjecture 3.4. The idea will be to construct a commuting graph randomly, so that it resembles $G(n,p)$, for the above value of the parameter $p$. We can arrange things so that $p$ has the desired value but, unlike in the Erd\H{o}s-Renyi graph, individual edges in our graph will not be chosen independently of one another. We will explain exactly what prevents one from transferring the analysis of $G(n,p)$ to our situation, while hoping to convince the reader that the former nevertheless provides a strong heuristic for Conjecture 3.4. Section 4 presents the results of simulations of our random group model which yielded explicit examples of commuting graphs of all possible diameters up to and including 10. Previously, computer searches of small groups yielded examples where the diameter was 6, and \cite{GP} also constructed an infinite family of groups where this is the case. The \texttt{Mathematica} code for our simulations is included in an appendix.   

\setcounter{equation}{0}

\section{The diameters of Erd\H{o}s-Renyi random graphs}

Let $n \in \mathbb{N}$ and $p = p(n) \in [0,1]$. Recall that $G(n,p)$ is the random graph on $n$ vertices in which every edge is present, independently of the others, with probability $p$. There is a great deal known about the diameter of $G(n,p)$, for different ranges of the parameter $p$ - see, for example, \cite{RW}. For our purposes, we are only interested in the case when $p$ is well above the connectivity threshold at $1/n$ and the diameter is concentrated on one value. The following fact is well-known:
\\
\\
{\bf Theorem 2.1.}
{\em Let $k\geq 2$ be an integer and let $\epsilon$ be a real number so that
$k-1 < 1/\epsilon < k$. Let $p = p(n) := n^{-1+\epsilon}$. Then, as $n \rightarrow 
\infty$, the random graph $G(n,p)$ is almost surely connected and of diameter
$k$.} 
\\
\\
This result seems to have been originally proven in \cite{KL}. We will sketch a
short proof using Janson's inequality. Given two vertices $a,b \in G(n,p)$ and a positive integer $l$, one may first estimate the expected number of paths of length $l$ between $a$ and $b$. The number of possible simple paths is clearly $\sim n^{l-1}$, and the probability of any particular such path being present is $p^l$. Hence, by linearity of expectation, the expected number 
of simple paths is $\sim n^{l-1} p^l$. Provided $p = \Omega(n^{-1})$, the same is true of the total expected number of paths, since the error term coming from non-simple paths is $\Theta(l) \cdot n^{l-2} p^{l-1}$. By the choice of $\epsilon$ and $p$ in the theorem, the expected number of paths behaves like a negative power of $n$ when $l < k$ and a positive power of $n$ when $l \geq k$. This is already enough to be able to deduce that the diameter of $G(n,p)$ is almost surely, as $n \rightarrow \infty$, greater than or equal to $k$. To prove equality, one needs to know that the number of paths from $a$ to $b$ is highly concentrated about its mean, and one can then employ a union bound over all possible choices of $a$ and $b$. Interestingly, a standard second moment analysis does not seem to provide a sufficiently strong concentration estimate here. So we instead use Janson's inequality, as stated in Theorems 8.1.1 and 8.1.2 of \cite{AS}. Paraphrasing, the inequality implies that the probability of there being no path from $a$ to $b$ satisfies
\be
\mathbb{P} ({\hbox{no path from $a$ to $b$}}) \leq \left\{ \begin{array}{lr} 
\exp \left( - \mu + \frac{\Delta}{2} \right), & {\hbox{if $\Delta \leq \mu$}}, 
\\
\exp \left( - \frac{\mu^2}{2\Delta} \right), & {\hbox{if $\Delta \geq \mu$}},
\end{array} \right.
\ee
where $\mu \sim n^{k-1}p^k = n^{k\epsilon - 1}$ is the expected number of paths, and the quantity $\Delta$ is defined as follows: Suppose the collection of all possible $a$-$b$ paths has been ordered in some way and let $B_{P_i}$ be the event that the $i$:th path is present. Then 
\be
\Delta = \sum_{i \sim j} \mathbb{P}(B_{P_i} \wedge B_{P_j}),
\ee
where the sum is taken over all ordered pairs $(i,j)$ for which the events $B_{P_i}$ and $B_{P_j}$ are dependent. Dependency just means that the paths $P_i$ and $P_j$ share at least one 
edge, and thus $\Delta$ can be easily estimated. Once again, the main term comes from pairs of simple paths, so it suffices to consider these. The number of edges common to a pair of dependent, simple paths of length $k$ can be any number $l \in \{1,2,...,k-1\}$. If there are $l$ common edges, then there are a total of $2k-l$ edges in the union of the two paths, and hence the probability that both paths are present is $p^{2k-l}$. It is also easy to see that the number of possible pairs in this case is $O_{k,l}(n^{2k-2-l})$, where the implicit constant takes account of the fact that we can choose where in each path of the pair the common edges are to be placed. Hence 
\be
\Delta = O_{k} \left( \sum_{l=1}^{k-1} n^{2k-2-l} p^{2k-l} \right) = (n^{k-1} p^k)^2 
\cdot O_{k} \left( \sum_{l=1}^{k-1} (np)^{-l} \right) = \mu^2 \cdot O_{k}(n^{-\epsilon}) = O_{k}(n^{2(k\epsilon - 1) - \epsilon}).
\ee
Since $k \epsilon - 1 < \epsilon$, for $n \gg 0$ we will certainly have $\Delta < \mu$, and hence the first alternative in (2.1) applies. It follows that, 
as $n \rightarrow \infty$,  
\be
\mathbb{P} ({\hbox{no path from $a$ to $b$}}) \leq \exp \left[ - (1+o_{n}(1))(n^{k\epsilon - 1}) \right].
\ee
By a simple union bound, the probability that every pair of vertices in $G(n,p)$ are connected by a path of length $k$ is $\geq 1 - n^{2} e^{-(1+o_{n}(1))(n^{k\epsilon - 1})}$, which goes to one as $n \rightarrow \infty$, thus proving Theorem 2.1.

\setcounter{equation}{0}

\section{The random group model}

Fix an integer $k \geq 2$ and a real number $\delta \in \left(0, \frac{1}{2k(k-1)} \right)$. Let $m$ be a positive integer and $n := 2^m - 1$. We will be letting $m \rightarrow \infty$ in due course. We can choose $\delta_1 > 0$ such that,
if $r := \lfloor (1-\delta_1) m \rfloor$ and $p := 2^{-r}$ then, for all $m \gg 0$, $1 + \log_{n} p \in \left( \frac{1}{k} + \delta, \frac{1}{k-1} - \delta \right)$.\par For all $m \gg_{k} 0$ we will define a random group $G_{m,k}$. With $r$ as above, let $Z(G_{m,k})$ be the elementary abelian $2$-group generated by elements $z_1,...,z_r,w_1,...,w_m$. The quotient $G_{m,k}/Z(G_{m,k})$ is to be the elementary abelian $2$-group generated by cosets $Zx_1,...,Zx_m$ such that $x_{i}^{2} = w_i$ for $i = 1,...,m$. So far there is nothing random in the definition of $G_{m,k}$. However, it remains to define the commutator relations $[x_i,x_j]$ for $1 \leq i < j \leq m$, and this is where the randomness will be introduced. Namely, for each pair $\{i,j\}$, independent of the others, we choose $[x_i,x_j]$ to be a uniformly random element of the subgroup $H$ 
of $Z(G_{m,k})$ generated by $z_1,...,z_r$. Some basic properties of the random group $G_{m,k}$ are straightforward to prove:

\begin{prop}
For any two distinct, non-empty subsets $\mathcal{S}$ and $\mathcal{T}$ of $\{1,...,m\}$ we have
\be
\mathbb{P} \left( \left[ \prod_{s \in \mathcal{S}} x_s, \prod_{t \in \mathcal{T}} x_t \right] = 1 \right) = 2^{-r}.
\ee
\end{prop}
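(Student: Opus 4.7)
The idea is to use the bilinearity of the commutator in a class-$2$ nilpotent group to reduce the statement to a linear algebra computation over $\mathbb{F}_2$. Since $G_{m,k}/Z(G_{m,k})$ is abelian, $G_{m,k}$ has nilpotence class at most $2$, so the identities $[ab,c]=[a,c][b,c]$ and $[a,bc]=[a,b][a,c]$ hold universally in $G_{m,k}$. Iterating these yields
\[
\left[\prod_{s\in\mathcal{S}} x_s,\;\prod_{t\in\mathcal{T}} x_t\right] \;=\; \prod_{s\in\mathcal{S},\,t\in\mathcal{T}} [x_s,x_t].
\]
Every factor on the right lies in the elementary abelian $2$-group $H\cong \mathbb{F}_2^r$, so I switch to additive notation and set $c_{ij}:=[x_i,x_j]\in H$.

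By construction, the variables $c_{ij}$ with $i<j$ are i.i.d.\ uniform on $H$; the diagonal entries satisfy $c_{ii}=0$; and since $H$ has exponent $2$, $c_{ji}=c_{ij}^{-1}=c_{ij}$. Writing $a_i:=\mathbf{1}[i\in\mathcal{S}]$ and $b_i:=\mathbf{1}[i\in\mathcal{T}]$, collecting the contributions of $c_{ij}$ and $c_{ji}$, and discarding the vanishing diagonal, I get
\[
\sum_{s\in\mathcal{S},\,t\in\mathcal{T}} c_{st} \;=\; \sum_{1\le i<j\le m}\bigl(a_ib_j+a_jb_i\bigr)\,c_{ij},
\]
where the coefficients are read in $\mathbb{F}_2$. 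A linear form in i.i.d.\ uniform $\mathbb{F}_2^r$-valued random variables is itself uniform on $\mathbb{F}_2^r$ provided at least one coefficient is nonzero: condition on every other $c_{ij}$ and use the uniformity of a variable carrying a nonzero coefficient.

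It remains to exhibit a nonzero coefficient, and this is where the hypothesis $\mathcal{S}\neq\mathcal{T}$ enters. Some index $k$ must satisfy $a_k\neq b_k$; swapping the roles of $\mathcal{S}$ and $\mathcal{T}$ if necessary, assume $k\in\mathcal{S}\setminus\mathcal{T}$. Since $\mathcal{T}$ is nonempty, pick any $l\in\mathcal{T}$; then $l\neq k$ (as $k\notin\mathcal{T}$), and $a_kb_l+a_lb_k = 1\cdot 1+a_l\cdot 0 \equiv 1\pmod 2$. Hence the linear form is uniformly distributed on $H$, and so the probability that it equals $0$ is precisely $|H|^{-1}=2^{-r}$. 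The only mild subtlety is bookkeeping the symmetry $c_{ij}=c_{ji}$ and the vanishing diagonal when collapsing the double sum into an $\mathbb{F}_2$-linear form in independent variables; from there, producing the nonzero coefficient is immediate.
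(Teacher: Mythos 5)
Your argument is correct and is essentially the paper's own proof: both reduce the commutator via bilinearity to a nontrivial $\mathbb{F}_2$-linear combination of the i.i.d.\ uniform variables $[x_i,x_j]$ ($i<j$), and conclude uniformity of the sum. The paper packages the nonzero coefficients as the set $\mathcal{V}=(\mathcal{S}\times\mathcal{T})\setminus(\mathcal{U}\times\mathcal{U})$ where $\mathcal{U}=\mathcal{S}\cap\mathcal{T}$, and your coefficient $a_ib_j+a_jb_i$ is exactly the indicator that the unordered pair $\{i,j\}$ meets $\mathcal{V}$, so the two proofs coincide up to notation.
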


\begin{proof}
The result is true by definition if $\mathcal{S}$ and $\mathcal{T}$ are singleton sets. It is therefore true in general since we can just make a change of variables, that is, a linear automorphism of $<Zx_1,...,Zx_m>$, considered as a vector space of dimension $m$ over $\mathbb{F}_2$. This principle, that the probabilities of certain events defined by relations between $x_1,...,x_m$ are invariant under a change of variables, will be used repeatedly in the remainder of this section. We give this principle a name, the {\em linear invariance principle (LIP)}. Let us give a rigorous proof that the principle indeed holds in the setting of the current lemma - in future, we will just refer to the principle without proof, as in all our applications of it it will be equally obvious that the application is valid.  
\par So let $\mathcal{S}$ and $\mathcal{T}$ be distinct 
non-empty subsets of $\{1,...,m\}$. Let $\mathcal{U} := \mathcal{S} \cap \mathcal{T}$ and $\mathcal{V} := (\mathcal{S} \times \mathcal{T}) \backslash (\mathcal{U} \times \mathcal{U})$. That $\mathcal{S}$ and $\mathcal{T}$ are distinct implies that $\mathcal{V}$ is non-empty. Then 
\be
 \left[ \prod_{s \in \mathcal{S}} x_s, \prod_{t \in \mathcal{T}} x_t \right] = 
\prod_{(i,j) \in \mathcal{V}} [x_i,x_j].
\ee
For each $(i,j) \in \mathcal{V}$, let $X_{(i,j)}$ denote the random variable $[x_i,x_j]$, which is uniformly distributed over the subgroup $H$. Let $X_{(\mathcal{S},\mathcal{T})}$ denote the random variable on the left-hand side of (3.2). Thus, considering $H$ as a vector space over $\mathbb{F}_2$, we have  
\be
X_{(\mathcal{S},\mathcal{T})} = \sum_{(i,j) \in \mathcal{V}} X_{(i,j)}.
\ee 
Note that if $(i,j) \in \mathcal{V}$ then $(j,i) \not\in \mathcal{V}$. Hence 
$X_{(\mathcal{S},\mathcal{T})}$ is a sum of $|\mathcal{V}|$ independent variables, each uniformly distributed over $H$. It follows that $X_{(\mathcal{S},\mathcal{T})}$ is also uniformly distributed over $H$, and this implies (3.1).
\end{proof}
 
\begin{prop}
As $m \rightarrow \infty$, $\mathbb{P}(G_{m,k}^{\prime} = \; <z_1,...,z_r>) \rightarrow 1$.
\end{prop}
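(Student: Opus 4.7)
The plan is to establish the two inclusions separately. The containment $G_{m,k}' \subseteq H := \langle z_1, \ldots, z_r \rangle$ is deterministic and holds for every realization of the random group: by construction every basic commutator $[x_i, x_j]$ lies in $H$, and since $H$ is central the quotient $G_{m,k}/H$ is abelian. Equivalently, because $G_{m,k}$ is of nilpotence class $2$, the commutator is bilinear in the generators, so every commutator in $G_{m,k}$ expands into a product of $[x_i,x_j]$'s and thus sits inside $H$.

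For the reverse inclusion I would rephrase the question as a random-vectors problem. The commutator subgroup certainly contains the set $\{[x_i, x_j] : 1 \leq i < j \leq m\}$, which by definition is a family of $N := \binom{m}{2}$ i.i.d.\ samples from the uniform distribution on $H$, viewed as the vector space $\F_2^r$. It therefore suffices to show that $N$ independent uniform vectors in $\F_2^r$ almost surely span the whole space, as $m \rightarrow \infty$.

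For this spanning step I would apply the standard union bound over the codimension-$1$ subspaces of $\F_2^r$. There are fewer than $2^r$ such hyperplanes, and a uniform random element of $H$ lies in any given one with probability exactly $1/2$, so by independence
\be
\mathbb{P}\left( \langle [x_i,x_j] : i<j \rangle \neq H \right) \; \leq \; 2^r \cdot 2^{-N} \; \leq \; 2^m \cdot 2^{-m(m-1)/2},
\ee
using $r \leq m$. This tends to $0$ as $m \to \infty$, which is all that is required. There is really no serious obstacle here: once the observation (already implicit in the LIP used for Proposition~3.1) that the basic commutators are i.i.d.\ uniform on $H$ is in hand, the remaining step is just an elementary comparison between the linear exponent $r \leq m$ and the quadratic exponent $\binom{m}{2}$.
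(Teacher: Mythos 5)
Your proof is correct and uses essentially the same argument as the paper: both establish the spanning of $H$ by a union bound over the $2^r - 1$ codimension-one subspaces, using that each of the $\binom{m}{2}$ independent uniform commutators lies in a fixed hyperplane with probability $1/2$, yielding a bound of the form $2^r \cdot 2^{-\binom{m}{2}} \to 0$. The only cosmetic difference is that the paper phrases the containment direction as ``no contradictions arise'' between the squares $w_i$ and the commutators, while you argue directly that $G_{m,k}' \subseteq H$ via bilinearity of the commutator in a class-$2$ group; these are equivalent.
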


\begin{proof}
First of all note that, since the squares $x_{i}^{2}$, $i = 1,...,m$, are automatically linearly independent of all the commutators $[x_i,x_j]$, whatever values we assign to the latter, we cannot introduce any contradictions. It thus remains to show that, with high probability (w.h.p.), the commutators $[x_i,x_j]$
span the subgroup $H$ generated by $z_1,...,z_r$, which we continue to regard as a vector space over $\mathbb{F}_2$. There are $2^r - 1$ codimension-one subspaces of $H$. List them in any order, and for each $\xi = 1,...,2^r - 1$, let 
$B_{\xi}$ be the event that all the commutators $[x_i,x_j]$ lie in the $\xi$:th subspace. LIP implies that each of the events $B_{\xi}$ have the same probability. Hence, by a simple union bound, 
\be
\mathbb{P} \left( \bigcup_{\xi = 1}^{2^r - 1} B_{\xi} \right) \leq \sum_{\xi = 1}^{2^r - 1} 
\mathbb{P}(B_{\xi}) = (2^r - 1) \mathbb{P}(B_1).
\ee
Without loss of generality, we may assume that the first codimension-one subspace in our ordering is that spanned by $z_1,...,z_{r-1}$. Since the value of any commutator $[x_i,x_j]$ is chosen at random, the probability of it lying inside this subspace is just $1/2$. Since the choices for different pairs are independent, it follows that 
\be
\mathbb{P}(B_1) = 2^{-C(m,2)} \leq 2^{-C(r,2)}.
\ee
From (3.4) and (3.5), it follows that the commutators span $H$ w.h.p. as $m$, and hence $r$, tend to infinity.
\end{proof}

\begin{prop}
With high probability, $Z(G_{m,k})$ is spanned by $z_1,...,z_r,w_1,...,w_m$ and nothing else. In other words, with high probability, there is no non-empty subset $\mathcal{S} \subseteq \{1,...,m\}$ such that 
\be
\left[ \prod_{s \in \mathcal{S}} x_s, x_i \right] = 1, \;\; {\hbox{for all $i = 1,...,m$}}.
\ee
\end{prop}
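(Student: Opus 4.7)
The plan is to handle each non-empty $\mathcal{S} \subseteq \{1,\ldots,m\}$ uniformly via LIP and then close with a union bound, exactly in the spirit of Proposition 3.2. Denote by $E_{\mathcal{S}}$ the event that (3.6) holds.

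My first step would be to observe that by LIP, $\mathbb{P}(E_{\mathcal{S}})$ does not depend on the choice of non-empty $\mathcal{S}$; in particular it equals $\mathbb{P}(E_{\{1\}})$. Concretely, one extends $\prod_{s\in\mathcal{S}} x_s$ to an $\mathbb{F}_2$-basis $x_1',\ldots,x_m'$ of $\langle Zx_1,\ldots,Zx_m\rangle$ with $x_1' = \prod_{s\in\mathcal{S}} x_s$. Since the new basis spans the same subspace as the old one and the commutator pairing is $\mathbb{F}_2$-bilinear, the event $E_{\mathcal{S}}$ is equivalent to $x_1'$ commuting with each $x_i'$, and LIP says this has the same probability as the analogous event stated in the original basis.

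Next I would compute $\mathbb{P}(E_{\{1\}})$ directly. The case $i=1$ in (3.6) is automatic, so this reduces to the probability that the $m-1$ commutators $[x_1,x_2],\ldots,[x_1,x_m]$, which are jointly independent and uniformly distributed in $H$ by construction, all equal the identity. Hence $\mathbb{P}(E_{\{1\}}) = 2^{-r(m-1)}$. A union bound then yields
\[
\mathbb{P}\bigl( \exists\, \mathcal{S} \neq \emptyset :\, E_{\mathcal{S}} \text{ holds} \bigr) \;\leq\; (2^m - 1) \cdot 2^{-r(m-1)} \;\leq\; 2^{\,m - r(m-1)},
\]
and because $r = \lfloor (1-\delta_1) m \rfloor$, the exponent is of order $-(1-\delta_1) m^2$, so this bound tends to $0$ as $m \to \infty$.

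I do not anticipate any real obstacle, as the argument is structurally parallel to that of Proposition 3.2. The one subtle point is re-verifying that LIP applies to an event involving several commutators simultaneously, rather than just one as in Proposition 3.1. This reduces to the observation that any invertible $\mathbb{F}_2$-linear change of basis on $\langle Zx_1,\ldots,Zx_m\rangle$ induces an invertible (hence distribution-preserving) $\mathbb{F}_2$-linear map on $H^{\binom{m}{2}}$, so the joint law of the full family of commutators is preserved, and in particular the probability of the multi-commutator event $E_{\mathcal{S}}$ is invariant under the change of variables.
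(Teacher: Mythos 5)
Your proof is correct and follows essentially the same route as the paper's: invoke LIP to reduce all $E_{\mathcal{S}}$ to $E_{\{1\}}$, compute $\mathbb{P}(E_{\{1\}}) = 2^{-r(m-1)}$ from independence of the $m-1$ commutators involving $x_1$, and close with a union bound over the $2^m - 1$ nonempty subsets using $r = \Theta(m)$. Your extra paragraph spelling out why LIP applies to events involving several commutators at once is a sensible elaboration that the paper leaves implicit.
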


\begin{proof}
For each non-empty subset $\mathcal{S} \subseteq \{1,...,m\}$, let $B_{\mathcal{S}}$ be the event that (3.6) holds. By LIP, each of the events $B_{\mathcal{S}}$ have the same probability. Since there are $2^m - 1$ such events it suffices, by a union bound, to show that $\mathbb{P}(B_{\{1\}}) = o(2^{-m})$. But $B_{\{1\}}$ occurs if and only if $x_1$ commutes with each of $x_2,...,x_m$. These $m-1$ events are independent, and each occurs with probability $2^{-r}$. Hence $\mathbb{P}(B_{\{1\}}) = 2^{-(m-1)r}$, which is $o(2^{-m})$, since $r = \Theta(m)$, as desired. 
\end{proof}     

Propositions 3.2 and 3.3 imply that, as $m \rightarrow \infty$, the group $G_{m,k}$ almost surely has the following three properties:
\par (I) it is nilpotent of class $2$,
\par (II) it is of order $2^{2m+r}$, 
\par (III) $G_{m,k}^{\prime}$, $Z(G_{m,k})$ and $G_{m,k}/Z(G_{m,k})$ are all elementary abelian, of orders $2^r, 2^{m+r}$ and $2^m$ respectively. 
\\
Hence the commuting graph $\Gamma(G_{m,k})$ almost surely consists of $n = 2^m - 1$ nodes, one for each non-empty subset $\mathcal{S} \subseteq \{1,...,m\}$. By Proposition 3.1, each edge of this graph is present with probability $p = 2^{-r}$. As explained at the outset, our choice of parameters ensures that $p = p(n) = n^{-1 + \epsilon_n}$, where $\epsilon_n \in \left( \frac{1}{k} + \delta, \frac{1}{k-1} - \delta \right)$. Hence, by analogy with Erd\H{o}s-Renyi graphs, we expect that the following holds:
\\
\\
{\bf Conjecture 3.4.} {\em As $m \rightarrow \infty$, $\Gamma(G_{m,k})$ is almost surely connected and has diameter $k$.}
\\
\\
We have been unable to prove this assertion, but we would be amazed if at least the first part of it, namely the claim that $\Gamma(G_{m,k})$ is a.s. connected, were false. Note that even that much would suffice to disprove Conjecture 1.1 since, as we will see below, it is easy to prove that the diameter of $\Gamma(G_{m,k})$ is a.s. at least $k$. The obvious line of attack for Conjecture 3.4 is to try to imitate, and modify where necessary, the proof given earlier of Theorem 2.1. Obviously, some modification is necessary since, unlike in $G(n,p)$, the edges of $\Gamma(G_{m,k})$ are not chosen independently of one another. Indeed, the graph is completely specified by the states of the $C(m,2)$ edges $\{Zx_i,Zx_j\}$, $1 \leq i < j \leq m$. On the one hand, we claim that first and second moment analyses concerning the number of paths between a pair of vertices of $\Gamma(G_{m,k})$ can be carried out as in the Erd\H{o}s-Renyi setting, with only minor technical modifications. To show this rigorously requires some work, which we now perform.
\par Fix a positive integer $l$. Let 
$a,b$ be two distinct vertices of $\Gamma(G_{m,k})$. Let 
\be
P : \gamma_0 = a \rightarrow \gamma_1 \rightarrow \gamma_2 \rightarrow \cdots \rightarrow \gamma_{l-1} \rightarrow b = \gamma_l
\ee
be a path of length $l$ between $a$ and $b$ in the complete graph $K_n$ on $n$ vertices. Let $B_P$ denote the event that $P$ is present in $\Gamma(G_{m,k})$. Further, 
let $H_P$ denote the subspace of $G_{m,k}/Z(G_{m,k})$ spanned by $a,\gamma_1,...,\gamma_{l-1},b$. Since $a \neq b$ we have a priori that
\be
2 \leq {\hbox{dim}}(H_P) \leq l+1.
\ee
For each $t \in \{2,...,l+1\}$, let $E_{l,t}$ denote the number of paths $P$ of 
length $l$ between $a$ and $b$ in $K_n$ for which ${\hbox{dim}}(H_P) = t$. Note that this number does not depend on the choice of $a$ and $b$.
If $P_1$ and $P_2$ are two paths between $a$ and $b$, set 
$H_{P_1,P_2} :=$ \\ $<H_{P_1},H_{P_2}>$. A priori we have 
\be
2 \leq {\hbox{dim}}(H_{P_1,P_2}) \leq 2l.
\ee
Finally, for each $t \in \{2,...,2l\}$, let $F_{l,t}$ denote the number of ordered pairs $(P_1,P_2)$ of paths of  
length $l$ between $a$ and $b$ in $K_n$ for which ${\hbox{dim}}(H_{P_1,P_2}) = t$.
All the crucial facts we need are contained in the next proposition:
\\
\\
{\bf Proposition 3.5.}
{\em With notation as above, and letting $m,n \rightarrow \infty$ for a fixed $l$, we have the following :
\par 
(i) $\mathbb{P}(B_P) \leq p^{{\hbox{dim}}(H_P)-1}$, and we have equality when 
${\hbox{dim}}(H_P) = l+1$.
\par (ii) $E_{l,t} = \Theta_l(1) \cdot n^{t-2}.$
\par (iii) The events $B_{P_1}$ and $B_{P_2}$ are independent when ${\hbox{dim}}(H_{P_1,P_2}) = 2l$.
\par (iv) For any two paths, $\mathbb{P}(B_{P_1} \wedge B_{P_2}) 
\leq p^{{\hbox{dim}}(H_{P_1,P_2})}$, with equality when the dimension is $2l$.
\par (v) $F_{l,t} = \Theta_l(1) \cdot n^{t-2}$.}

\begin{proof}
(i) Consider a path as in (3.7). The point is that, as we run through the 
vertices $\gamma_i$, each time the addition of a $\gamma_i$ increases the 
dimension of the space $H_P$ by one, the edge corresponding to the commutator $[\gamma_i,\gamma_{i-1}]$ is independent of all the previous edges along the path. This is an immediate consequence of our definition of $G_{m,k}$, along with an application of LIP if one wants to be completely rigorous. The inequality for 
$\mathbb{P}(B_P)$ follows immediately in turn. When the dimension of $H_P$ is maximal then all the edges along the path are independent, which gives equality in that case.
\\
\\
(ii) Consider paths as in (3.7) again. We must estimate the number of ways 
we can choose the ordered sequence $(\gamma_1,...,\gamma_{l-1})$ of vertices, assuming that dim$(H_P) = t$. We can just as well start with $a$ and $b$, which span a $2$-dimensional space, and choose the remaining $\gamma_i$ in order, so that the dimension increases by $t-2$ in all. Each time we choose a new $\gamma_i$ we must decide whether to increase the dimension by one or not. Each time we do the former, there are $(1-o_{n}(1))n$ choices for $\gamma_i$. Each time we do the latter, there are certainly no more than $2^{l} = O_l(1)$ choices for $\gamma_i$. We will get another $O_l(1)$ factor from the freedom to choose on which $t-2$ occasions the dimension is to be increased. But clearly the result is that
we have $\Theta_l(1) \cdot n^{t-2}$ choices for the path, as claimed. 
\\
\\
The argument for (iii) and (iv) follows the same lines as that for (i), while the argument for (v) follows the same lines as that for (ii). We omit further details.    
\end{proof}

Because of Proposition 3.5, parts of the proof of Theorem 2.1 can be run through more or less verbatim. Firstly, it follows from parts (i) and (ii) of the proposition that, in estimating the expected number $\mu$ of paths of a fixed length $l$ in $G_{m,k}$ between a pair of vertices $a$ and $b$, we will have a main term coming from paths $P$ for which dim$(H_P) = l+1$, and an error term coming from all other paths. The main term is $\Theta_l(1) \cdot n^{l-1} p^{l}$, and the error term is $\Theta_l(1) \cdot n^{l-2} p^{l-1}$. This is exactly as in the proof of Theorem 2.1, up to $\Theta_l(1)$ factors, which make no difference for fixed $l$ as $m,r,n \rightarrow \infty$. Similarly, in the computation of $\Delta$, it follows from parts (iii), (iv) and (v) of the proposition that we will get a sum just like (2.3), except that the implicit constant depending on $k$ will be different. Once again, this makes no difference as $m,r,n \rightarrow \infty$, since $k$ is fixed. 
\par Since we can estimate $\mu$ and $\Delta$ more or less as before, we can perform a second moment analysis. The problem, as already stated in Section 2, is that this won't yield a.s. connectedness of the graph. Indeed, a standard application of the second moment method (see Chapter 4 of \cite{AS}) gives that, for any pair $a,b$ of vertices in $\Gamma(G_{m,k})$, 
\be
\mathbb{P}({\hbox{no path from $a$ to $b$}}) \leq \frac{\Delta + \mu}{\mu^2} = O(n^{-\delta_2}),
\ee
where $\delta_2$ is some positive number, depending on the choice of the parameters $\delta$ and $\delta_1$ at the outset of Section 3. From (3.10) and a simple averaging argument we can deduce
\\
\\
{\bf Proposition 3.6.} {\em There is some $\delta_3 > 0$, depending on the choices of $\delta$ and $\delta_1$, such that, as $m \rightarrow \infty$, $\Gamma(G_{m,k})$ a.s. has a connected component of size at least $n - n^{1-\delta_3}$.}
\\
\\
But to prove a.s. connectedness of the full graph we need much stronger concentration of the number of $a$-$b$ paths than that provided by (3.10). For Theorem 2.1 we applied Janson's inequality, but we cannot do that here. Janson's inequality assumes that there is an underlying set of independent coin tosses such that each event $B_P$ represents the success of a certain subset of these tosses. That requirement is satisfied by $G(n,p)$ but not $\Gamma(G_{m,k})$, since the edges in the latter graph are not placed independently. At this time we do not see how to get around this problem, so Conjecture 3.4 remains open. However, we hope the reader will agree with us that the theoretical evidence in its favour seems strong. The next section will also provide numerical evidence.  

\setcounter{equation}{0}

\section{Simulations}

We wrote a \texttt{Mathematica} program which takes as input integers $m$ and $r$ and creates a random group $G$ which, with high probability, has properties (I), (II) and (III) in Section 3. This was achieved by generating a random binary string of length $r$ for each of the $C(m,2)$ commutators $[x_i,x_j]$. In other words, in the notation of Section 3, for each pair $\{i,j\}$ with $1 \leq i < j \leq m$, we generate a binary string $\epsilon_1 \cdots \epsilon_r$, $\epsilon_i \in \{0,1\}$, and set $[x_i,x_j] := \prod_{i=1}^{r} z_{i}^{\epsilon_i}$. Our program then computes the diameter of the resulting commuting graph $\Gamma(G)$. Note that, if the diameter is at least $3$, then properties (I)-(III) definitely hold, though if the diameter is $2$, then there could be extra elements in $Z(G)$. The full source code for our program is reproduced in Figure 1 on page 11 and may also be obtained directly from the authors.  
\par The first table below presents the results of simulations for $m=7$ and different values of $r$. For each $r$, we generated 1000 graphs and the table shows the distribution of their diameters.

\begin{table}[ht!]
\begin{center}
\begin{tabular}{|c|c|} \hline
$r$ & ${\hbox{Number of graphs of each diameter}}$ \\ \hline \hline
$3$ & $\mathbf{2:}1000$        \\ \hline
$4$ & $\mathbf{3:}8 \;\;\; \mathbf{4:}977 \;\;\; \mathbf{5:}15$ \\ \hline
$5$ & $\mathbf{5:}14 \;\; \mathbf{6:}97 \;\; \mathbf{7:}79 \;\; \mathbf{8:}8 \;\; \mathbf{9:}1 \;\; \mathbf{\infty:}801$ \\ \hline
$6$ & $\mathbf{\infty:} 1000$ \\ \hline
\end{tabular}
\end{center}
\end{table}  

In this table, as $r$ increases, the diameters of the graphs generally increase, as expected. Indeed, since the commutators $[x_i,x_j]$ are determined by generating independent bits, it is easy to prove (by a coupling argument) that the expected value of the diameter is an increasing function of $r$. For $r=3$ every graph had diameter $2$ and for $r=4$ nearly 98$\%$ of graphs had diameter $4$. Hence in these cases, the diameter is strongly concentrated on one value. Note, however, that for $r=4$ this value is bigger than what Conjecture 3.4 would predict, something which may or may not simply be due to the small value of $m$. For $r=5$, about 80$\%$ of graphs were disconnected, and the diameters of those which were connected show a greater spread. Here we already have examples of graphs of all diameters up to $9$. 
\par The second table below shows the output of $200$ simulations for $m=8$:

\begin{table}[ht!]
\begin{center}
\begin{tabular}{|c|c|} \hline
$r$ & ${\hbox{Number of graphs of each diameter}}$ \\ \hline \hline
$4$ & $\mathbf{3:}200$        \\ \hline
$5$ & $\mathbf{4:}38 \;\;\; \mathbf{5:}162$ \\ \hline
$6$ & $\mathbf{8:}4 \;\;\; \mathbf{\infty:}196$ \\ \hline
\end{tabular}
\end{center}
\end{table}  

For $r=6$ we allowed our program to keep running and it subsequently also generated graphs of diameters 6,7 and 9 and, finally, after several hours and on the 716:th simulation, a graph of diameter 10. The commutator relations for this group are reproduced in full in Figure 2 at the end of the paper. Of the 716 graphs sampled for $r=6$, a total of 22 were connected.  
\par In the above data we thus see the following pattern. For fixed $m$ and sufficiently small values of $r$, the graph $\Gamma(G)$ is always connected and its diameter is highly concentrated about one value. There is a minimum value $r_0(m)$ where disconnected graphs begin to appear, and the probability that $\Gamma(G)$ is disconnected is already very high at $r = r_0(m)$. Those ``outliers'' which are connected have a much wider spread of diameters. Though as $m$ increases it rapidly becomes impractical to collect significant amounts of data, we thus have some very tentative evidence for an extended version of Conjecture 3.4, to the effect that there should be a sharp connectivity threshold for our random commuting graph model, similar to the well-known situation for $G(n,p)$, though perhaps for a different threshold function $p = p(n)${\footnote{Note that, in our model, the parameter $p$ can only take on a discrete set of values, namely negative powers of 2. This does not affect our ability to precisely define a {\em threshold function} $p(n)$, however.}}. The difficulty of obtaining data as $m$ grows is exhibited by the fact that, for the pair $(m,r) = (12,9)$, it took about a week for our program to produce $48$ graphs, of which $38$ had diameter $7$ and the remainder had diameter $6$. This is the smallest value of $m$ for which we could reliably generate connected graphs of greater diameter than were previously known in the literature. We have not observed any graph of diameter greater than $10$, but have every reason to believe that they exist. Note that, since the complexity of computing the diameter of a graph grows, in general, as the cube of the number of vertices, the running time of any program which simulates $G_{m,k}$ is of the order of $2^{3m}$.


\begin{figure*}[ht!]
  \includegraphics[page=1,width=\textwidth]{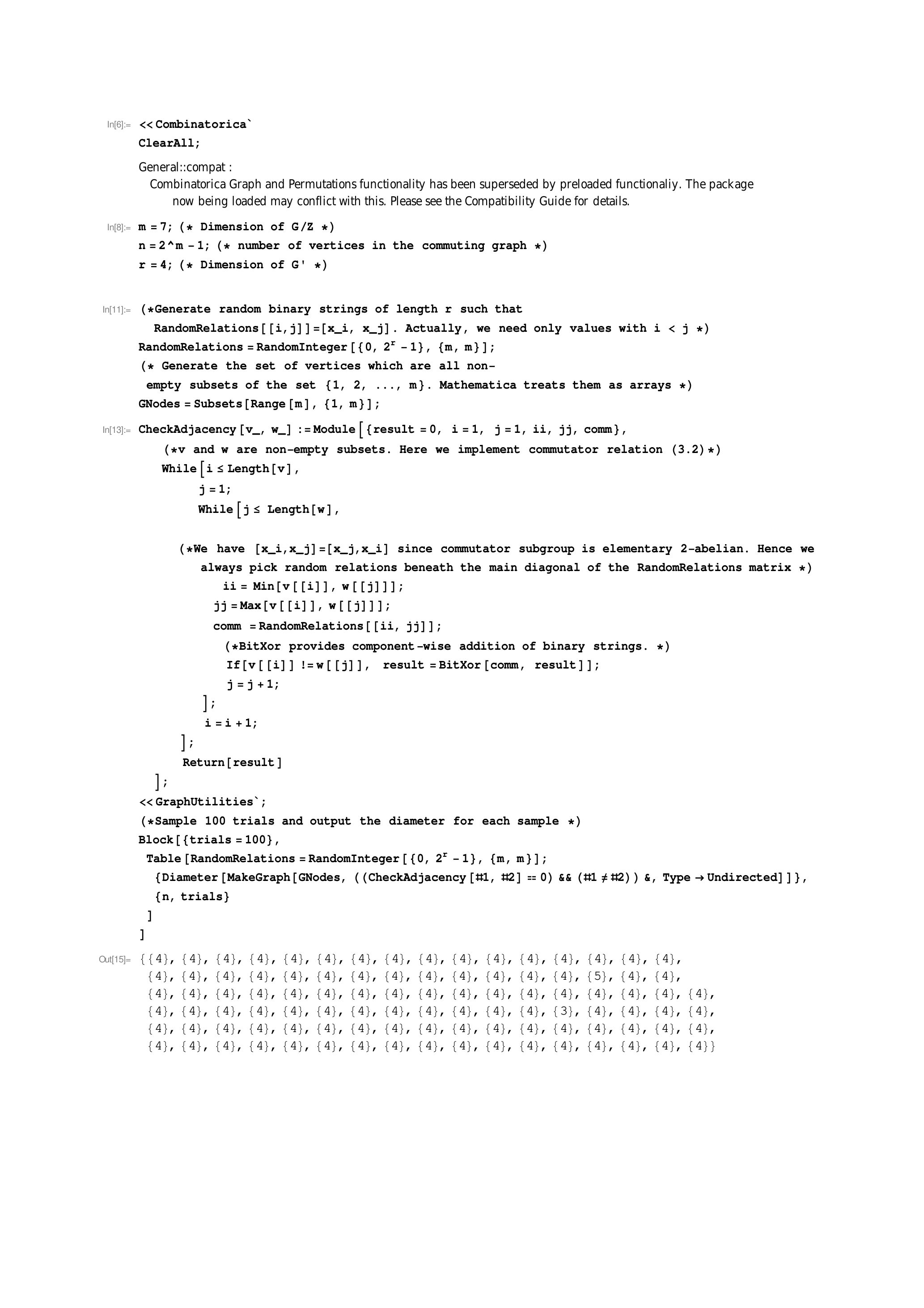} 
 \label{fig:ini}
\caption{Source code for $m=7$, $r=4$ and $100$ simulations.}
\end{figure*}

\begin{figure*}[ht!]
       \includegraphics[width=\textwidth]{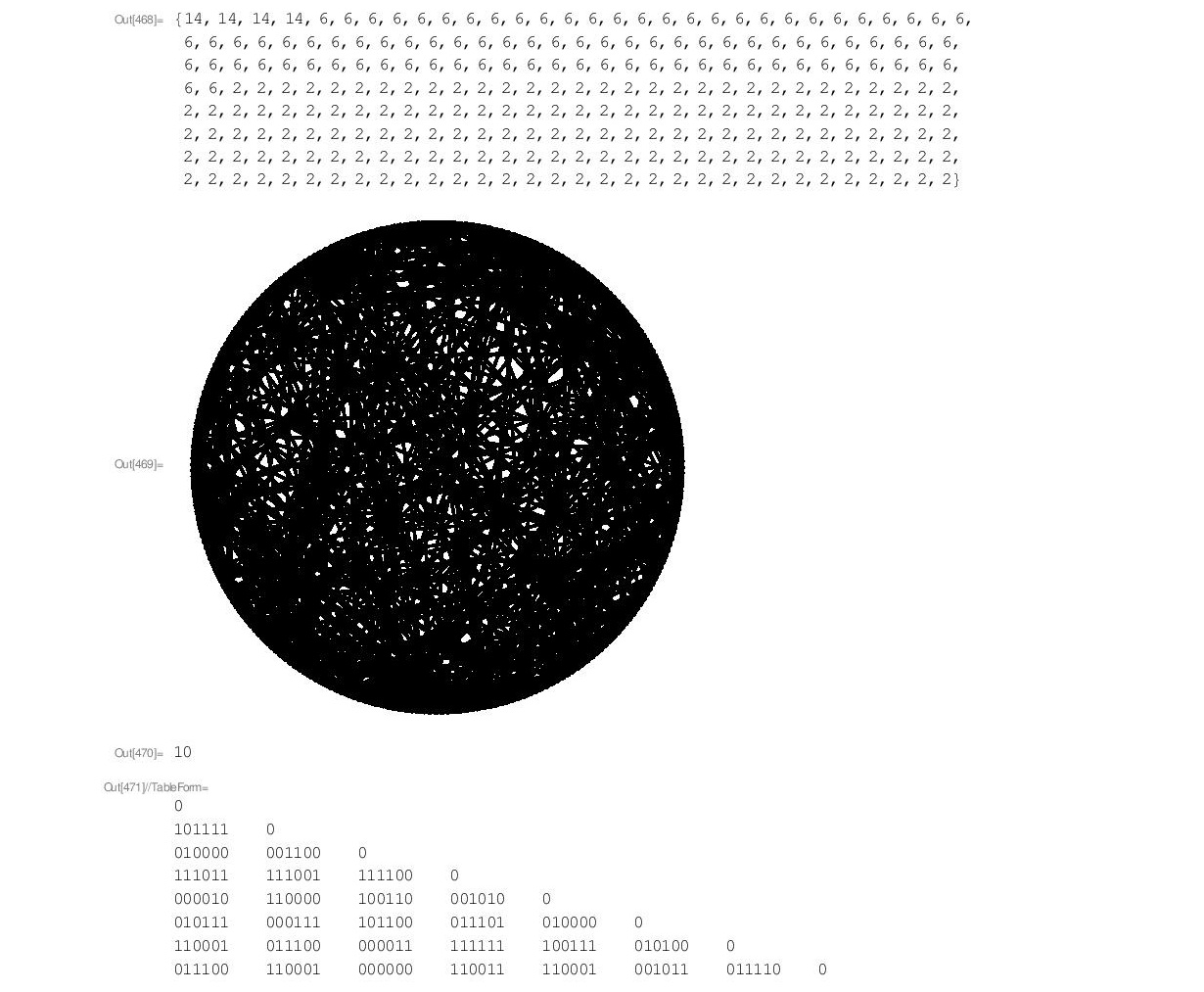} 
 \label{fig:typical}
\caption{The degree sequence and commutator relations for a graph of diameter $10$, with $m=8$, $r=6$.} 
\end{figure*}


\vspace*{1cm}

\end{document}